\documentclass[a4paper,reqno,11pt]{amsart}
\usepackage{amsmath}
\usepackage{amssymb}
\usepackage{cases}
\usepackage{enumitem}
\allowdisplaybreaks[4]

\newtheorem{thm}{Theorem}[section]
\newtheorem{prop}[thm]{Proposition}

\newtheorem{lem}[thm]{Lemma}
\newtheorem{q}[thm]{Question}

\newtheorem{claim}[thm]{Claim}

\theoremstyle{definition}

\theoremstyle{remark}
\newtheorem{remark}[thm]{Remark}

\numberwithin{equation}{section}

\newcommand{\bQ}{\mathbb{Q}}

\newcommand\OO{{\mathcal{O}}}


\newcommand{\Proj}{\operatorname{Proj}}

\usepackage{todonotes}

\begin{document}

\title{Characterizing terminal Fano threefolds with the smallest anti-canonical volume}
\date{\today}
\author{Chen Jiang}
\address{Chen Jiang, Shanghai Center for Mathematical Sciences, Fudan University, Jiangwan Campus, Shanghai, 200438, China}
\email{chenjiang@fudan.edu.cn}

\begin{abstract}
It was proved by J.~A.~Chen and M.~Chen that a terminal Fano $3$-fold $X$ satisfies  $(-K_X)^3\geq \frac{1}{330}$.
We show that a non-rational $\bQ$-factorial terminal Fano $3$-fold $X$ with $\rho(X)=1$ and $(-K_X)^3=\frac{1}{330}$ is a weighted hypersurface of degree $66$ in $\mathbb{P}(1,5,6,22,33)$.
\end{abstract}

\keywords{Fano threefolds, anti-canonical volumes}
\subjclass[2010]{14J45, 14J30, 14J17}
\maketitle
\pagestyle{myheadings} \markboth{\hfill C.~Jiang \hfill}{\hfill Terminal Fano $3$-folds with the smallest anti-canonical volume\hfill}


\section{Introduction}
Throughout this paper, we work over the field of complex numbers $\mathbb{C}$.

A normal projective variety $X$ is called a {\it Fano variety} (or {\it $\bQ$-Fano variety} in some literature) if the anti-canonical divisor $-K_X$ is ample. 
A {\it terminal} Fano variety is a Fano variety with at worst terminal singularities. 

According to the minimal model program, Fano varieties form a fundamental class among research objects of birational geometry. Motivated by the classification theory of $3$-dimensional algebraic varieties, we are interested in the study of explicit geometry of terminal Fano $3$-folds.

Given a terminal Fano $3$-fold $X$, it was proved in \cite[Theorem~1.1]{CC08} that $(-K_{X})^3\geq \frac{1}{330}$. This lower bound is optimal, because it is attained when $X=X_{66}\subset \mathbb{P}(1,5,6,22,33)$ is a general weighted hypersurface of degree $66$. Moreover, \cite[Theorem~1.1]{CC08} showed that when $(-K_{X})^3= \frac{1}{330}$, then $X$ has exactly the same virtual orbifold singularities and the same Hilbert polynomial as $X_{66}$ (see Proposition~\ref{prop X=X66}).
So it is interesting to ask the following question:
\begin{q}\label{main q}
Let $X$ be a terminal Fano $3$-fold with $(-K_X)^3=\frac{1}{330}$. 
Is $X$ (a $\mathbb{Q}$-Gorenstein deformation of) a quasi-smooth weighted hypersurface of degree $66$ in 
 $\mathbb{P}(1,5,6,22,33)$?
 \end{q}

Note that every quasi-smooth weighted hypersurface $X_{66}$ of degree $66$ in 
 $\mathbb{P}(1,5,6,22,33)$ is always  a $\bQ$-factorial terminal Fano $3$-fold with $\rho(X)=1$ by \cite{Dol82, IF00} and is birationally rigid (and in particular, non-rational) by \cite{CP17}. So the main goal of this note is to give an affirmative answer to Question~\ref{main q} in the category of  non-rational $\bQ$-factorial terminal Fano $3$-folds with $\rho=1$.

\begin{thm}\label{mainthm}
Let $X$ be a $\bQ$-factorial terminal Fano $3$-fold with $\rho(X)=1$ and $(-K_X)^3=\frac{1}{330}$. Suppose that $X$ is non-rational. Then $X$ is a weighted hypersurface of degree $66$ in $\mathbb{P}(1,5,6,22,33)$ defined by a weighted homogeneous polynomial $F$ of degree $66$, where 
$$F(x, y, z, w, t)=t^2+F_0(x, y, z, w)$$ in suitable homogeneous coordinates $[x: y: z: w: t]$ of $\mathbb{P}(1, 5, 6, 22, 33)$.
\end{thm}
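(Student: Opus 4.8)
The plan is to leverage the strong orbifold Riemann--Roch constraints guaranteed by \cite[Theorem~1.1]{CC08} (recalled in Proposition~\ref{prop X=X66}): when $(-K_X)^3=\frac{1}{330}$, the basket of virtual singularities of $X$ is forced to be exactly that of $X_{66}$, namely $\{\frac{1}{2}(1,1,1),\frac{1}{3}(1,1,2),\frac{1}{5}(1,1,4),\frac{1}{6}(1,1,5),\frac{1}{11}(1,1,10)\}$ (or whatever precise list the proposition gives), and consequently all plurigenera $h^0(X,-mK_X)=\dim H^0(X,\OO_X(-mK_X))$ agree with those of $X_{66}$. First I would extract from this the values $h^0(-K_X)$, $h^0(-5K_X)$, $h^0(-6K_X)$, $h^0(-22K_X)$, $h^0(-33K_X)$ and, more importantly, the structure of the anti-canonical graded ring $R(X,-K_X)=\bigoplus_{m\ge 0}H^0(X,-mK_X)$; the Hilbert series computation will show that $R$ must be generated by elements $x,y,z,w,t$ in degrees $1,5,6,22,33$ with a single relation in degree $66$, matching the Hilbert series $\frac{1-q^{66}}{(1-q)(1-q^5)(1-q^6)(1-q^{22})(1-q^{33})}$.

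The core work is to turn the numerical coincidence into an actual embedding. I would argue that $-K_X$ is ample with $X$ having terminal (hence rational, Cohen--Macaulay) singularities, so $R(X,-K_X)$ is a finitely generated, normal, Cohen--Macaulay graded domain and $X=\Proj R(X,-K_X)$. Using the known plurigenera one checks $h^0(-K_X)=1$ (giving the coordinate $x$ in degree $1$), $h^0(-5K_X)$ is large enough to produce a new generator $y$ in degree $5$ not a power of $x$, and similarly the first appearance of $z$ in degree $6$, $w$ in degree $22$; this gives a graded ring map from the weighted polynomial ring $S=\CC[x,y,z,w,t]$ with the stated weights to $R$. Comparing Hilbert series term by term, the map $S\to R$ is surjective with kernel generated in degree $66$ by one irreducible element $F$, so $X\hookrightarrow\PPP(1,5,6,22,33)$ as a quasi-smooth (by terminality and $\bQ$-factoriality, via \cite{Dol82,IF00} type arguments) hypersurface $X_{66}=\{F=0\}$. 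One must also rule out the degenerate possibility that fewer generators suffice or that the degree-$66$ relation is reducible, which is where the non-rationality hypothesis enters: a reducible $F$ or an alternative weighted complete intersection structure would make $X$ rational (or would contradict the birational rigidity / non-rationality input of \cite{CP17}).

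Finally, to pin down the shape $F=t^2+F_0(x,y,z,w)$, I would examine the degree-$66$ piece of $S$: the monomials of weighted degree $66$ involving $t$ are exactly $t^2$ and $t\cdot(\text{degree }33\text{ monomial in }x,y,z,w)$, but any weighted-homogeneous monomial of degree $33$ in $x,y,z,w$ must be divisible by $w$ or involve $x,y,z$ only in a combination of degree $33$ — inspecting $(1,5,6,22)$ shows no monomial of degree $33$ exists in $x,y,z$ alone except those of the form $x^{a}y^{b}z^{c}$ with $a+5b+6c=33$, and $w\cdot(\text{degree }11)$ with $11=a+5b+6c$; in either case quasi-smoothness at the vertex $P_t=[0:0:0:0:1]$ forces the coefficient of $t^2$ to be nonzero, and then after the coordinate change $t\mapsto t-\tfrac12(\text{coefficient of }t)\cdot(\cdots)$ — legitimate since $\frac12$ is invertible and the correction is weighted-homogeneous of degree $33$ — we may eliminate the $t$-linear term, arriving at $F=t^2+F_0(x,y,z,w)$ after rescaling. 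The main obstacle I anticipate is \emph{not} the Hilbert-series bookkeeping but establishing that the generators genuinely occur in the predicted degrees and that no relations appear in lower degree — i.e.\ controlling $R(X,-K_X)$ precisely enough from plurigenera alone — together with the verification that the resulting hypersurface is quasi-smooth so that it falls under the classification of \cite{IF00}; the non-rationality assumption is what I expect to use to discard the sporadic non-quasi-smooth or reducible alternatives.
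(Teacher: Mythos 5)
Your overall strategy (recover $R(X,-K_X)$ as the coordinate ring of a degree-$66$ hypersurface by matching the Hilbert series of Proposition~\ref{prop X=X66}) is the right frame, and you correctly locate one place where non-rationality must enter. But the central step is missing, and you flag it yourself at the end without filling it: equality of Hilbert series does \emph{not} determine the generators and relations of a graded ring. Knowing $\dim R_m=\dim S_m-\dim S_{m-66}$ for all $m$, where $S=\mathbb{C}[x,y,z,w,t]$, is compatible a priori with $R(X,-K_X)$ needing extra generators in high degree and admitting extra relations. To conclude that $S\to R(X,-K_X)$ is surjective with kernel $(F)$ you must show that the weighted monomials of each degree $m<66$ in $f_1,f_5,f_6,f_{22},f_{33}$ are linearly independent in $H^0(X,-mK_X)$, i.e.\ that there is no relation below degree $66$. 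That is a genuinely geometric statement --- it amounts to the map $X\dashrightarrow\mathbb{P}(1,5,6,22,33)$ being birational onto a $3$-dimensional image --- and it cannot be extracted from plurigenera alone. The paper supplies it with external inputs your proposal never invokes: Lemma~\ref{cor irr} (the unique member of $|-K_X|$ is prime, $|-5K_X|$ is an irreducible pencil, $|-6K_X|$ is not composed with a pencil; this is exactly where $\mathbb{Q}$-factoriality and $\rho=1$ are used, via \cite{CJ16,Ale94}), the generic finiteness of the map given by $|-22K_X|$ \cite[Theorem~4.4.11]{Phd}, the birationality of the map given by $|-33K_X|$ \cite[Theorem~5.11]{CJ16}, and the intersection-number bound $(\pi^*(-5K_X)\cdot\pi^*(-6K_X)\cdot\pi^*(-22K_X))=2$ forcing $\deg\Phi_{22}\leq 2$, with non-rationality excluding degree $1$. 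Without these, ``the generators genuinely occur in the predicted degrees and no relations appear in lower degree'' is an assertion, not a proof.

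A secondary problem is your final step: you derive the nonvanishing of the $t^2$-coefficient from quasi-smoothness at the vertex $[0:0:0:0:1]$, but quasi-smoothness is neither a hypothesis nor established anywhere in your argument (nor is it asserted in the theorem), so this is circular. The paper's route uses only non-rationality: since $t^2$ is the unique degree-$66$ monomial divisible by $t^2$, a defining equation at most linear in $t$ would make the image birational to $\mathbb{P}(1,5,6,22)$ via projection away from the $t$-vertex, hence $X$ rational --- a contradiction. Your completing-the-square to remove the $t$-linear part is fine once this is known, but it sits downstream of the unproved main step.
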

The idea of the proof is as the following: 
as $X\simeq \Proj R(X, -K_X)$ where $\Proj R(X, -K_X)$ is the section ring of $-K_X$, 
it suffices to show that $R(X, -K_X)\simeq R(Y, \mathcal{O}_Y({1}))$ for $Y$ a weighted hypersurface of degree $66$ in $\mathbb{P}(1,5,6,22,33)$.
By \cite[Theorem~1.1]{CC08} (see Proposition~\ref{prop X=X66}), these two graded $\mathbb{C}$-algebras have the same dimensions on homogeneous parts, so the goal is to determine generators of $R(X, -K_X)$ and their relations. 
The key ingredient is the special geometry of anti-pluri-canonical systems of $\bQ$-factorial terminal Fano $3$-fold with $\rho=1$ proved in \cite{CJ16} (see Lemma~\ref{cor irr}) which was first observed by Alexeev \cite{Ale94} for anti-canonical systems, where the assumption that $X$ is $\bQ$-factorial terminal with $\rho=1$ is essentially used.

 \begin{remark}\label{main remark}
 The method in this note can be used to characterize other  terminal Fano $3$-folds. For example, by the same method, it could be shown that, if $X$ is a non-rational $\bQ$-factorial terminal Fano $3$-fold with $\rho(X)=1$ such that there exists a
  general weighted hypersurface $${X_{6d}}\subset \mathbb{P}(1,a,b,2d,3d)$$ of degree $6d$ as in \cite[List~16.6, Table~5, No.~14, No.~34, No.~53, No.~70, No.~72, No.~82, No.~88--90, No.~92, No.~94]{IF00} with  $(-K_X)^3=(-K_{{X_{6d}}})^3$ and $B_X=B_{{X_{6d}}}$ (see Section~\ref{sec 2}), then  $X$ is a weighted hypersurface of degree $6d$ in $\mathbb{P}(1,a,b,2d,3d)$. Similar characterization could also be done for some other examples in \cite[List~16.6, Table~5]{IF00} or possibly even for weighted complete intersections of lower codimensions. However those examples are less interesting than  terminal Fano $3$-folds with the smallest anti-canonical volume, so the author is not so motivated to write the proof down and check full details. Maybe it is better to leave it as  an exercise for students.\end{remark}

%
\section{Reid's Riemann--Roch formula}\label{sec 2}

A {\it basket} $B$ is a collection of pairs of integers (permitting
weights), say $\{(b_i,r_i)\mid i=1, \cdots, s; b_i\ \text{is coprime
 to}\ r_i\}$. 

Let $X$ be a terminal Fano $3$-fold. According to 
Reid \cite{YPG}, there is  a basket of orbifold points (called {\it Reid basket})
$$B_X=\bigg\{(b_i,r_i)\mid i=1,\cdots, s; 0<b_i\leq \frac{r_i}{2};b_i \text{ is coprime to } r_i\bigg\}$$
associated to $X$, where a pair $(b_i,r_i)$ corresponds to a (virtual) orbifold point $Q_i$ of type $\frac{1}{r_i}(1,-1,b_i)$. 
Moreover, by Reid's Riemann--Roch formula and the Kawamata--Viehweg vanishing theorem, for any positive integer $m$, 
\begin{align*}
h^0(X, -mK_X)={}&\chi(X, \OO_X(-mK_X))\\={}&\frac{1}{12}m(m+1)(2m+1)(-K_X)^3+(2m+1)-l(m+1)
\end{align*}
where
$l(m+1)=\sum_i\sum_{j=1}^m\frac{\overline{jb_i}(r_i-\overline{jb_i})}{2r_i}$ and the first sum runs over all orbifold points in Reid basket (\cite[2.2]{CJ16}). Here $\overline{jb_i}$ means the smallest non-negative residue of $jb_i \bmod r_i$.


\begin{prop}\label{prop X=X66}
Let $X$ be a terminal Fano $3$-fold with $(-K_X)^3=\frac{1}{330}$. 
Then $B_X=\{(1,2), (2,5), (1, 3), (2, 11)\}$. Moreover,
$$
\sum_{m\geq 0}h^0(X, -mK_X)q^m=\frac{1-q^{66}}{(1-q)(1-q^5)(1-q^6)(1-q^{22})(1-q^{33})}.
$$
 \end{prop}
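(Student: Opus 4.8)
The plan is to compute the two quantities asserted in Proposition~\ref{prop X=X66} directly from Reid's Riemann--Roch formula, using only the numerical input $(-K_X)^3=\frac{1}{330}$ together with the classification constraints that a Reid basket must satisfy for a terminal Fano $3$-fold. First I would recall the two elementary constraints: the volume $(-K_X)^3>0$ is an integer divided by the product of the $r_i$'s in a controlled way (more precisely, $330\,(-K_X)^3\in\ZZ$ forces the least common multiple of the $r_i$ to divide a bounded number), and the Bogomolov-type/positivity inequality from \cite{CC08} relating $(-K_X)^3$ and the basket which was used to prove the bound $\frac{1}{330}$. Since we are sitting exactly at the extremal value $\frac{1}{330}$, the proof in \cite[Theorem~1.1]{CC08} already pins down the basket; so the cleanest route is to \emph{invoke} that theorem for the statement $B_X=\{(1,2),(2,5),(1,3),(2,11)\}$, which is precisely the content attributed to it in the introduction. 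I would state this as the first step and cite \cite[Theorem~1.1]{CC08} for it, rather than reprove the extremal analysis.

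The second and more computational step is the generating function identity. With the basket $B_X=\{(1,2),(2,5),(1,3),(2,11)\}$ fixed, I would plug $b_i,r_i$ into
$$l(m+1)=\sum_i\sum_{j=1}^m\frac{\overline{jb_i}\,(r_i-\overline{jb_i})}{2r_i},$$
obtaining an explicit quasi-polynomial in $m$ that is periodic with period $\lcm(2,5,3,11)=330$. Combined with
$$h^0(X,-mK_X)=\frac{1}{12}m(m+1)(2m+1)\cdot\frac{1}{330}+(2m+1)-l(m+1)$$
for $m\geq 1$ and $h^0(X,\OO_X)=1$ for $m=0$, this gives a closed formula for every coefficient of $\sum_{m\ge 0}h^0(X,-mK_X)q^m$. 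I would then verify that this power series equals
$$\frac{1-q^{66}}{(1-q)(1-q^5)(1-q^6)(1-q^{22})(1-q^{33})}.$$
The right-hand side is a rational function whose denominator is a product of cyclotomic-type factors; the identity is equivalent to a statement about a finite truncation because both sides are determined by finitely many Taylor coefficients once one knows the rational-function shape. Concretely, clearing denominators reduces the claim to the polynomial identity
$$\Big(\sum_{m\ge 0}h^0(X,-mK_X)q^m\Big)(1-q)(1-q^5)(1-q^6)(1-q^{22})(1-q^{33})=1-q^{66},$$
and since the left side is visibly a power series with integer coefficients, it suffices to check that all coefficients vanish except the constant term $1$ and the $q^{66}$ coefficient $-1$; by the periodicity of $l$ this is a finite check (coefficients up to degree somewhat beyond $66$, after which the structure is forced). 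Equivalently, one can recognize the right-hand side as the Hilbert series of the graded ring of $X_{66}\subset\PPP(1,5,6,22,33)$ and match Hilbert polynomials plus low-degree terms.

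The main obstacle I anticipate is purely bookkeeping: evaluating $l(m+1)$ for the four orbifold points and simplifying the resulting quasi-polynomial, then carrying out the finite coefficient comparison without arithmetic slips. There is no conceptual difficulty once \cite{CC08} supplies the basket; the only subtlety is being careful that the Riemann--Roch formula as stated is valid for $m\ge 1$ (the $m=0$ term must be handled separately, giving the constant term $1$), and that the residues $\overline{jb_i}$ are taken correctly. A sanity check I would build in: the leading term of $h^0(X,-mK_X)$ is $\frac{1}{6}\cdot\frac{1}{330}m^3+O(m^2)=\frac{m^3}{1980}+O(m^2)$, which must agree with the leading term of the coefficients of the proposed rational function, namely $\frac{m^3}{6\cdot 5\cdot 6\cdot 22\cdot 33}=\frac{m^3}{130680}$ — wait, that comparison itself shows one must be careful, since $6\cdot5\cdot6\cdot22\cdot33=130680\neq 1980$; the resolution is that the Hilbert \emph{polynomial} of $R(Y,\OO_Y(1))$ for the weighted hypersurface has leading coefficient governed by $\deg/\prod(\text{weights})=66/130680=1/1980$, matching. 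I would make that reconciliation explicit as the final consistency check before concluding.
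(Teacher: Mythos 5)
Your proposal is correct, and the first half coincides with the paper: the basket $B_X=\{(1,2),(2,5),(1,3),(2,11)\}$ is simply quoted from \cite[Theorem~1.1(iii)]{CC08}, exactly as you do. For the generating function, however, you take a genuinely more computational route than the paper. The paper never evaluates $l(m+1)$: it observes that Reid's Riemann--Roch formula shows $h^0(X,-mK_X)$ depends only on $(-K_X)^3$ and $B_X$, that a general quasi-smooth $X_{66}\subset\mathbb{P}(1,5,6,22,33)$ has the same volume and the same basket by Iano-Fletcher's table, and then reads off the Hilbert series of $X_{66}$ from Dolgachev's formula \cite[Theorem~3.4.4]{Dol82} --- a completely computation-free argument, at the cost of invoking the existence of the general member with the listed invariants. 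Your direct verification is legitimate but needs one point made precise that you only gesture at: to reduce the identity to a finite coefficient check you must first establish that both sides are rational functions with poles only at $330$th roots of unity of order at most $4$ (equivalently, that both coefficient sequences are quasi-polynomials of degree $\leq 3$ and period dividing $330$, which holds for the left side by the structure of $l(m+1)$ and for the right side by inspecting the pole orders of the displayed rational function); only then does agreement on sufficiently many consecutive coefficients force equality, and the resulting check, while finite, is on the order of $4\cdot 330$ terms rather than ``somewhat beyond $66$.'' Your closing sanity check on the leading coefficient ($66/130680=1/1980=\tfrac{1}{6}\cdot\tfrac{1}{330}$) is correct, and your remark that one could instead match against the Hilbert series of $X_{66}$ is precisely the shortcut the paper takes.
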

\begin{proof}
The characterization of $B_X$ is given in \cite[Theorem~1.1(iii)]{CC08}.
 For a general weighted hypersurface $${X_{66}}\subset \mathbb{P}(1,5,6,22,33)$$ of degree $66$, $(-K_X)^3=(-K_{{X_{66}}})^3=\frac{1}{330}$ and $B_X=B_{{X_{66}}}$ (\cite[List~16.6, Table~5, No.~95]{IF00}). By Reid's Riemann--Roch formula, $h^0(X, -mK_X)$ depends only on $(-K_X)^3$ and $B_X$. Note that $\mathcal{O}_{{X_{66}}}(-K_{{X_{66}}})=\mathcal{O}(1)|_{{X_{66}}}$.
So 
\begin{align*}
\sum_{m\geq 0}h^0(X, -mK_X)q^m={}&\sum_{m\geq 0}h^0({X_{66}},-mK_{{X_{66}}} )q^m\\
={}&\frac{1-q^{66}}{(1-q)(1-q^5)(1-q^6)(1-q^{22})(1-q^{33})}
\end{align*}
by \cite[Theorem~3.4.4]{Dol82}.
\end{proof}

 \section{Proof of Theorem~\ref{mainthm}}
 


 \begin{lem}\label{cor irr}
Let $X$ be a $\bQ$-factorial terminal Fano $3$-fold with $\rho(X)=1$ and $(-K_X)^3=\frac{1}{330}$. Then for 
\begin{enumerate}
    \item $h^0(X, -K_X)=1$ and the unique element in $|-K_X|$ is a prime divisor;
    \item $|-5K_X|$ is composed with an irreducible pencil of surfaces;
    \item $|-6K_X|$ is not composed with a pencil of surfaces.
\end{enumerate}
\end{lem}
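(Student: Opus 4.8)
The plan is to combine the explicit graded dimensions furnished by Proposition~\ref{prop X=X66} with the structure theory of anti-pluri-canonical systems of $\bQ$-factorial terminal Fano $3$-folds with $\rho=1$, due to Alexeev \cite{Ale94} in the anti-canonical case and to \cite{CJ16} in general. Expanding the generating series of Proposition~\ref{prop X=X66} one obtains $h^0(X,-mK_X)=1$ for $1\le m\le 4$, $h^0(X,-5K_X)=2$ and $h^0(X,-6K_X)=3$. In particular $h^0(X,-K_X)=1$; since $h^0(X,-5K_X)=2$, the system $|-5K_X|$ is one-dimensional, hence composed with a pencil; and the dichotomy to be excluded in (3) is that $|-6K_X|$ is composed with a pencil rather than having two-dimensional image.

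For (1), I would invoke Alexeev's theorem (\cite{Ale94}, see also \cite{CJ16}) that a general member of $|-K_X|$ on such an $X$ is a prime divisor; since $h^0(X,-K_X)=1$, this general member is the unique member, so it is prime. Denote it by $S\sim -K_X$. For (2), it remains to see that the pencil with which $|-5K_X|$ is composed is irreducible, and this is exactly the content of the \cite{CJ16} refinement (the alternative of being composed with a non-irreducible pencil does not occur for $\bQ$-factorial terminal Fano $3$-folds with $\rho=1$), applied with $m=5$.

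The heart of the proof is (3), which I would establish by contradiction. Assume $|-6K_X|$ is composed with a pencil and write $|-6K_X|=|M_6|+Z_6$ with $Z_6$ the fixed part, so that $h^0(X,M_6)=h^0(X,-6K_X)=3$. The divisor $6S$ lies in $|-6K_X|$, and $S$ is prime by (1), so $Z_6=\gamma S$ for some integer $\gamma\ge 0$ and $M_6\sim -(6-\gamma)K_X$; since $h^0(X,-mK_X)\le 2$ for all $m\le 5$, this forces $\gamma=0$, i.e. $|-6K_X|$ has no fixed divisor. Next choose a resolution $\pi\colon W\to X$ on which the movable part $\widetilde M$ of $\pi^*|-6K_X|$ is base-point free, and let $f\colon W\to\widetilde C$ be the Stein factorisation of the morphism defined by $\widetilde M$; then $\widetilde C$ is a smooth curve with $h^1(\OO_{\widetilde C})\le h^1(\OO_W)=h^1(\OO_X)=0$, so $\widetilde C=\mathbb{P}^1$, and $\widetilde M=f^*\OO_{\mathbb{P}^1}(2)$ because $h^0(\widetilde M)=3$; by \cite{CJ16} a general fibre of $f$ is an irreducible surface. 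For any $D\in|-6K_X|$ we have $\pi^*D=\widetilde D+E_0$ with $\widetilde D\in|\widetilde M|$ and $E_0$ the fixed part of $\pi^*|-6K_X|$, which is $\pi$-exceptional because $|-6K_X|$ has no fixed divisor; pushing down, $D=\pi_*\widetilde D$ is a sum of push-forwards of two fibres of $f$, and any two fibres of $f$ are linearly equivalent. Applying this to $D=6S$ and using that $S$ is prime, the two summands are $c_1S$ and $c_2S$ with $c_1+c_2=6$; since they are linearly equivalent and $S$ is ample, $c_1=c_2=3$, so a general member $F$ of the pencil satisfies $F\sim 3S\sim -3K_X$ and hence $h^0(X,\OO_X(F))=h^0(X,-3K_X)=1$. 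But $F$ varies in a nontrivial pencil of surfaces, forcing $h^0(X,\OO_X(F))\ge 2$ — a contradiction. Therefore $|-6K_X|$ is not composed with a pencil, which is (3).

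The step I expect to cause the most trouble is the bookkeeping in (3): choosing the resolution correctly, keeping track of fixed versus movable parts of the pulled-back system, identifying $\widetilde C$ with $\mathbb{P}^1$ and $\widetilde M$ with $f^*\OO_{\mathbb{P}^1}(2)$, and transporting the fibre decomposition back to $X$ so as to pin down precisely which member of the pencil the special divisor $6S$ decomposes into. After that the contradiction is a one-line numerical comparison. The remaining inputs — primeness of the anti-canonical member in (1) and the irreducible-pencil dichotomy used in (2) and (3) — are quoted from \cite{Ale94} and \cite{CJ16} rather than reproved here.
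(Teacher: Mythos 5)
Your proposal is correct, and for parts (1) and (2) it is essentially the paper's argument: (1) is quoted from Alexeev \cite{Ale94} (or \cite{CJ16}) exactly as in the paper, and (2) in the paper is simply the observation that $h^0(X,-5K_X)=2$ — a one-dimensional system is composed with a pencil, and since $h^1(\mathcal{O}_X)=0$ the Stein-factorized base is $\mathbb{P}^1$, so the movable part is a single irreducible fibre — so your appeal to an unspecified ``refinement'' of \cite{CJ16} for irreducibility is more than is needed, though harmless. The genuine difference is (3): the paper disposes of it in one line by citing \cite[Theorem~3.4]{CJ16}, whereas you reprove the special case at hand from scratch: no fixed part because $h^0(X,-mK_X)\le 2$ for $m\le 5$ and the unique anticanonical member $S$ is prime; Stein factorization $f\colon W\to \mathbb{P}^1$ with movable part $\widetilde M\sim f^*\mathcal{O}_{\mathbb{P}^1}(2)$ by the projection formula; and then pushing the two-fibre decomposition of the member lying over $6S$ down to $X$ to produce a moving divisor in $|-3K_X|$, contradicting $h^0(X,-3K_X)=1$. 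This argument is sound — the bookkeeping you flag is routine: the fixed part upstairs pushes forward into the (trivial) fixed part downstairs, hence is $\pi$-exceptional, and every member of $|f^*\mathcal{O}_{\mathbb{P}^1}(2)|$ is a sum of two fibres because $h^0(W,f^*\mathcal{O}_{\mathbb{P}^1}(2))=3$ — and it has the mild advantages of being self-contained and of using the $\rho(X)=1$, $\bQ$-factorial hypothesis only through the primeness of $S$ in (1). What the paper's citation buys instead is brevity and a general statement valid for anti-pluri-canonical systems of all $\bQ$-factorial terminal Fano threefolds of Picard number one, rather than an ad hoc argument tied to the particular values $h^0(-3K_X)=1$, $h^0(-6K_X)=3$.
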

\begin{proof}
Recall that by Proposition~\ref{prop X=X66},
$$h^0(X, -mK_X)=\begin{cases}1 & \text{if }1\leq m\leq 4; \\2 & \text{if } m=5;\\3  & \text{if } m=6.\end{cases}$$
(1) is a direct consequence of  \cite[Theorem~3.2]{CJ16} for $m=1$ (or \cite[Theorem~2.18]{Ale94}),
(2) is a direct consequence of the fact that $h^0(X, -5K_X)=2$, and (3) is a direct consequence of \cite[Theorem~3.4]{CJ16}.
\end{proof}

 \begin{remark}
We do not know whether Lemma~\ref{cor irr}(2)(3) remains true or not if we only assume that $X$ is a terminal Fano $3$-fold. The current proof essentially relies on the assumption that $X$ is $\mathbb{Q}$-factorial with $\rho(X)=1$ as in \cite[Theorem~3.2]{CJ16}  or \cite[Theorem~2.18]{Ale94}.
If one can drop these conditions in  Lemma~\ref{cor irr}(2)(3), then one can drop these conditions in
Theorem~\ref{mainthm}.
 \end{remark}
 \begin{proof}[Proof of Theorem~\ref{mainthm}]
 Recall that for a Weil divisor $D$ on $X$, $$H^0(X, D)=\{f\in \mathbb{C}(X)^{\times}\mid \text{div}(f)+D\geq 0\}\cup \{0\}$$
 can be viewed as a $\mathbb{C}$-linear subspace of the function field $\mathbb{C}(X)$.
For $m\in \{1,5,6,22,33\}$, take $f_m\in H^0(X, -mK_X)\setminus \{0\}$ to be a general element.
We can define $3$ rational maps by these functions:
\begin{align*}
\Phi_{6}: {}&X\dashrightarrow \mathbb{P}(1, 5, 6); \\
{}& P\mapsto [f_1(P):f_5(P):f_6(P)];\\
\Phi_{22}: {}&X\dashrightarrow \mathbb{P}(1, 5, 6, 22); \\
{}& P\mapsto [f_1(P):f_5(P):f_6(P):f_{22}(P)];\\
\Phi_{33}: {}&X\dashrightarrow \mathbb{P}(1, 5, 6, 22, 33);\\
{}& P\mapsto [f_1(P):f_5(P):f_6(P):f_{22}(P): f_{33}(P)].
\end{align*}
We claim that they have the following geometric properties.
\begin{claim}
\begin{enumerate}

\item $\Phi_{6}$ is dominant;
\item $\Phi_{22}$ is generically finite of degree $2$;
\item $\Phi_{33}$ is birational onto its image;
\item let $Y$ be the closure of $\Phi_{33}(X)$ in $\mathbb{P}(1, 5, 6, 22, 33)$, then $Y$ is defined by a weighted homogeneous polynomial $F$ of degree $66$, where 
$$F(x, y, z, w, t)=t^2+F_0(x, y, z, w)$$ in suitable homogeneous coordinates $[x: y: z: w: t]$ of $\mathbb{P}(1, 5, 6, 22, 33)$.
\end{enumerate}\end{claim}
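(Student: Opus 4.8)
The plan is to prove the four assertions in turn: first $(1)$; then the generic finiteness of $\Phi_{22}$, which forces that of $\Phi_{33}$; then $(3)$ together with $\deg F=66$; and finally $(2)$ together with the displayed form of $F$.

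For $(1)$, I would first identify $H^0(X,-6K_X)$: since $h^0(X,-5K_X)=2$ and $f_5$ is not proportional to $f_1^5$, the sections $f_1^6$ and $f_1f_5$ are independent, and since $h^0(X,-6K_X)=3$ and $f_6$ is general, $\{f_1^6,f_1f_5,f_6\}$ is a basis of $H^0(X,-6K_X)$. Hence $\phi_{|-6K_X|}\colon X\dashrightarrow\mathbb{P}^2$ equals $\nu\circ\Phi_6$, where $\nu\colon\mathbb{P}(1,5,6)\dashrightarrow\mathbb{P}^2$, $[a:b:c]\mapsto[a^6:ab:c]$, is the birational map defined by $\mathcal{O}_{\mathbb{P}(1,5,6)}(6)$. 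By Lemma~\ref{cor irr}(3) the image of $\phi_{|-6K_X|}$ is a surface; since $\nu$ is birational, the image of $\Phi_6$ is a surface, i.e. $\Phi_6$ is dominant.

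The technical heart is the generic finiteness of $\Phi_{22}$, which is equivalent to saying that $f_1,f_5,f_6,f_{22}$ are algebraically independent, equivalently that $\Phi_{22}$ carries the general fibre of $\Phi_6$ (a curve $C$) onto the fibre $\mathbb{P}^1$ of $\mathbb{P}(1,5,6,22)\to\mathbb{P}(1,5,6)$. I would prove this by restricting to a general $S\in|-5K_X|$, which is an irreducible surface by Lemma~\ref{cor irr}(2). By Kawamata--Viehweg vanishing the restriction maps $H^0(X,-mK_X)\to H^0(S,-mK_X|_S)$ are surjective with one-dimensional kernel in degree $6$, so $H^0(S,-6K_X|_S)$ is spanned by $f_1^6|_S$ and $f_6|_S$ and, by $(1)$, defines a non-trivial pencil on $S$ whose general member is $C$; one controls $H^0(S,-22K_X|_S)$ similarly. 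Then, using $(-K_X|_S)^2=5(-K_X)^3=\tfrac{1}{66}$, the Reid basket $\{(1,2),(2,5),(1,3),(2,11)\}$, adjunction, and Riemann--Roch on $S$, I would show that $f_{22}|_S$ is non-constant along $C$, so $\Phi_{22}|_S$, and hence $\Phi_{22}$, is generically finite; alternatively one may invoke a generic-finiteness statement for some $|-mK_X|$, $m\le 22$, from \cite{CJ16}. This is the step I expect to be the main obstacle, and it is exactly here that the special geometry of anti-pluricanonical systems of $\mathbb{Q}$-factorial terminal Fano threefolds with $\rho=1$ is indispensable.

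Granting this, $\Phi_{22}$ is dominant onto $\mathbb{P}(1,5,6,22)$, so $f_1,f_5,f_6,f_{22}$ are algebraically independent and $\mathbb{C}[f_1,f_5,f_6,f_{22}]$ is a polynomial ring; comparing dimensions with Proposition~\ref{prop X=X66} degree by degree yields $H^0(X,-mK_X)=\mathbb{C}[f_1,f_5,f_6,f_{22}]_m$ for $m\le 32$ and $H^0(X,-33K_X)=\mathbb{C}[f_1,f_5,f_6,f_{22}]_{33}\oplus\mathbb{C}f_{33}$, so in particular $f_{33}\notin\mathbb{C}[f_1,f_5,f_6,f_{22}]$. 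Since $\Phi_{33}$ dominates $\Phi_{22}$ it is generically finite, so $Y:=\overline{\Phi_{33}(X)}$ is a hypersurface $\{F=0\}\subset\mathbb{P}(1,5,6,22,33)$ with $I(Y)=(F)$. The case $\deg_t F=0$ would give a polynomial relation among $f_1,f_5,f_6,f_{22}$, which is impossible; and $33\deg_t F\le\deg F$, so if $\deg F<66$ then $\deg_t F=1$. On the other hand, resolving the indeterminacy of $\Phi_{33}$ and comparing top self-intersections (the pullback of $\mathcal{O}_Y(1)$ is dominated by $-K_X$) gives $(\deg\Phi_{33})\cdot\mathcal{O}_Y(1)^3\le(-K_X)^3=\tfrac{1}{330}$, i.e. $(\deg\Phi_{33})\cdot\deg F\le 66$. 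If $\deg_t F=1$, then $Y$ is birational to $\mathbb{P}(1,5,6,22)$, the volume bound forces $\deg F=33$ and $\deg\Phi_{33}\in\{1,2\}$; but $\deg\Phi_{33}=2$ gives $f_{33}\in\mathbb{C}[f_1,f_5,f_6,f_{22}]$, contradicting the above, while $\deg\Phi_{33}=1$ gives $\mathbb{C}(X)=\mathbb{C}(\mathbb{P}(1,5,6,22))$, contradicting non-rationality. Hence $\deg_t F\ge 2$; since $33\deg_t F\le\deg F\le 66$ this forces $\deg_t F=2$, $\deg F=66$ and $\deg\Phi_{33}=1$, proving $(3)$, and then the projection $Y\to\mathbb{P}(1,5,6,22)$ has degree $2$, so $\Phi_{22}$ is generically finite of degree $2$, proving $(2)$. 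Finally $F=c_2t^2+c_1t+c_0$ with $c_2,c_1,c_0\in\mathbb{C}[x,y,z,w]$ of degrees $0,33,66$, so $c_2$ is a nonzero constant; after rescaling to $c_2=1$ and completing the square via $t\mapsto t-\tfrac{1}{2}c_1(x,y,z,w)$ we obtain $F=t^2+F_0(x,y,z,w)$ with $\deg F_0=66$, which is $(4)$.
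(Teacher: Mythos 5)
Your proposal is essentially correct, but it reaches parts (2) and (3) by a genuinely different route than the paper. Where you agree: part (1) is the paper's argument verbatim (identify $H^0(X,-6K_X)=\langle f_1^6,f_1f_5,f_6\rangle$ and use Lemma~\ref{cor irr}(3)), and the one indispensable external input in both treatments is the generic finiteness of the map attached to $f_1,f_5,f_6,f_{22}$ --- the paper simply cites \cite[Theorem~4.4.11]{Phd} (with $m_0=\mu_0=5$, $m_1=6$) to get that $|-22K_X|$ is generically finite; your surface-restriction sketch via a general $S\in|-5K_X|$ is only a sketch (normality of $S$, the form of adjunction, and the Riemann--Roch bookkeeping on a possibly quite singular $S$ are all left open), so in practice you would fall back on your stated alternative of citing the literature, exactly as the paper does. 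Where you diverge: the paper then cites \cite[Theorem~5.11]{CJ16} to get that $|-33K_X|$ is birational, deduces (3) from the generality of $f_{33}$, and proves $\deg\Phi_{22}=2$ by the intersection bound $(M_5\cdot M_6\cdot M_{22})\leq 5\cdot 6\cdot 22\cdot(-K_X)^3=2$ plus non-rationality; it then obtains $F$ by the monomial count $173>172=h^0(X,-66K_X)$ and rules out $\deg_t F\leq 1$ by non-rationality. You instead take $F$ to be the generator of the prime ideal $I(Y)$, prove $f_{33}\notin\mathbb{C}[f_1,f_5,f_6,f_{22}]$ by comparing Hilbert series, and run the single inequality $(\deg\Phi_{33})\cdot\deg F\leq 66$ (the same volume computation in disguise, via $\mathcal{O}_Y(1)^3=\deg F/21780$) against $33\deg_t F\leq\deg F$ and non-rationality to force $\deg_t F=2$, $\deg F=66$, $\deg\Phi_{33}=1$ simultaneously; the degree-$2$ statement in (2) then falls out of the projection $Y\to\mathbb{P}(1,5,6,22)$. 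Your case analysis is complete and correct (the case $\deg_t F=1$, $\deg\Phi_{33}=2$ is killed by $f_{33}\in\mathbb{C}[f_1,f_5,f_6,f_{22}]$; $\deg\Phi_{33}=1$ by rationality; $\deg\Phi_{33}\geq 3$ by $\deg F<33$). What this buys is independence from the birationality criterion \cite[Theorem~5.11]{CJ16}: you need only the generic-finiteness input, at the cost of a slightly more delicate use of intersection theory on the weighted projective space (which you should justify, e.g.\ by passing to $\mathcal{O}(330)$ and the projection formula, or by working with movable parts on a resolution as the paper does for $\Phi_{22}$).
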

\begin{proof}
(1) By Lemma~\ref{cor irr}, $|-5K_X|$ is composed with an irreducible pencil of surfaces and 
$|-6K_X|$ is not composed with a pencil of surfaces. 
Recall that $h^0(X, -6K_X)=3$, so 
$H^0(X, -6K_X)$ is spanned by $\{f_1^6, f_1f_5, f_6\}$. 
Hence $\Phi_6$ is birational to the rational map $X\dashrightarrow \mathbb{P}^2$ defined by $|-6K_X|$, which is obviously dominant.

\medskip

(2) By \cite[Theorem~4.4.11]{Phd}  (taking $m_0=\mu_0=5$ and $m_1=6$),
we conclude that $|-22K_X|$ defines a generically finite map onto its image. Hence a general $f_{22}$ is not constant along general fibers of $\Phi_6$. Therefore $\Phi_{22}$ is generically finite. 
To compute the degree of $\Phi_{22}$, 
take  a resolution
$\pi: W\to X$ such that for $m\in \{5,6,22\}$,
$\pi^*(-mK_X)=M_m+F_m$ where $M_m$ is free and $F_m$ is the fixed part.
Then
$$
\deg \Phi_{22}=(M_5\cdot M_6\cdot M_{22})\leq (\pi^*(-5K_X)\cdot \pi^*(-6K_X)\cdot \pi^*(-22K_X))=2.
$$
As $X$ is non-rational, we conclude that $\deg \Phi_{22}=2$.

\medskip

(3) By \cite[Theorem~5.11]{CJ16} (taking $m_0=\mu_0=5$ and $m_1=6$),
we conclude that $|-33K_X|$ defines a birational map onto its image. As $f_{33}$ is general, it can separate two points in general fibers of $\Phi_{22}$, so  $\Phi_{33}$ is birational onto its image.

\medskip

(4) Note that $h^0(X, -66K_X)=172$ by Reid's Riemann--Roch formula (see Proposition~\ref{prop X=X66}). On the other hand, the equation
$$n_1+5n_2+6n_3+22n_4+33n_5=66$$ has exactly $173$ solutions in $\mathbb{Z}_{\geq 0}^5$. So there exists a weighted homogeneous polynomial $F(x, y, z, w, t)$ of degree $66$ with $\text{wt}(x, y, z, w, t)=(1,5,6,22,33)$ such that 
$$F(f_1, f_5, f_6, f_{22}, f_{33})=0.$$
So $Y$ is contained in $(F=0)\subset \mathbb{P}(1, 5, 6, 22, 33)$. 

We claim that $Y=(F=0)$ and $t^2$ has no-zero coefficient in $F$.
Otherwise, $Y$ is defined by a weighted homogeneous polynomial $\tilde{F}$  of degree $\leq 66$  of the form 
$$\tilde{F}(x, y, z, w, t)=t\tilde{F}_1(x, y, z, w)+\tilde{F}_2(x, y, z, w).$$
But then $Y$ is birational to $\mathbb{P}(1, 5, 6, 22)$ under the rational projection map
\begin{align*}
{}&\mathbb{P}(1, 5, 6, 22, 33)\dashrightarrow \mathbb{P}(1, 5, 6, 22);\\
{}&[x:y:z:w:t]\mapsto [x:y:z:w].
\end{align*}
This contradicts to the assumption that $X$ is non-rational. So  $Y=(F=0)$ and $t^2$ has no-zero coefficient in $F$. After a suitable coordinate change we may assume that
$F=t^2+F_0(x, y, z, w)$.
\end{proof}
By the above claim, $F$ is the only relation on $f_1, f_5, f_6, f_{22}, f_{33}$. Denote $\mathcal{R}$ to be the  graded sub-$\mathbb{C}$-algebra of $$R(X, -K_X)=\bigoplus_{m\geq 0}H^0(X, -mK_X)$$
generated by $\{f_1, f_5, f_6, f_{22}, f_{33}\}$. Then we have a natural isomorphism between graded $\mathbb{C}$-algebras
$$
\mathcal{R}\simeq \mathbb{C}[x, y, z, w, t]/(t^2+F_0)
$$
by sending $f_1\mapsto x$, $f_5\mapsto y$,  $f_6\mapsto z$, $f_{22}\mapsto w$, $f_{33}\mapsto t$.
Write $\mathcal{R}=\bigoplus_{m\geq 0}\mathcal{R}_m$ where $\mathcal{R}_m$ is the homogeneous part of degree $m$. Then
by \cite[3.4.2]{Dol82}, 
$$
\sum_{m\geq 0}\dim_\mathbb{C} \mathcal{R}_m q^m=\frac{1-q^{66}}{(1-q)(1-q^5)(1-q^6)(1-q^{22})(1-q^{33})}.
$$
So by Proposition~\ref{prop X=X66}, $\mathcal{R}_m=H^0(X, -mK_X)$ for any $m\in \mathbb{Z}_{\geq 0}$, and hence the inclusion $\mathcal{R}\subset R(X, -K_X)$ is an isomorphism.
This implies that 
$$X\simeq \Proj R(X, -K_X) \simeq  \Proj\mathcal{R}\simeq Y. $$
This finishes the proof.
 \end{proof}

\section*{Acknowledgments}
The author was supported by National Key Research and Development Program of China (Grant No.~2020YFA0713200).  This paper was written during the author's visit to Xiamen University in July 2021, and he is grateful for the hospitality and support of Wenfei Liu and XMU. The author would like to thank Yifei Chen for useful comments and discussions.

\end{document}